
\documentclass[12pt]{article}



\usepackage{algorithm,algpseudocode}
\usepackage{amscd,amsfonts,amsopn,amssymb,amstext}
\usepackage{appendix}
\usepackage{booktabs}
\usepackage{bbm,bm}
\usepackage[centertags]{amsmath}
\usepackage{color}
\usepackage{diagbox}
\usepackage{fullpage}
\usepackage{graphicx,graphics,psfrag}
\usepackage{indentfirst}
\usepackage{latexsym,enumerate}
\usepackage{lscape}
\usepackage{multirow}
\usepackage{natbib}
\usepackage{rotating}
\usepackage{setspace}
\usepackage{srcltx}
\usepackage{subfigure}
\usepackage[T1]{fontenc}
\usepackage{threeparttable}
\usepackage{times}
\usepackage{url}
\usepackage{verbatim}

\makeatletter
\renewcommand\@biblabel[1]{#1.}
\makeatother

\newtheorem{lemma}{\vspace{-0.2cm}\newline{Lemma}}



\newtheorem{theorem}{Theorem}

\newtheorem{corollary}{Corollary}
\newenvironment{proof}{{\bf Proof:\ \ }}{\qed}
\newcommand{\qed}{\rule{0.5em}{1.5ex}}







\title{\Large \bf{
New Representations of Catalan's Constant, Apery's Constant and the Euler Numbers Obtained from the Half Hyperbolic Secant Distribution }}

\author{\normalsize
{\bf Emilio G\'omez-D\'eniz$\,^a$, Jos\'e Mar\'ia Sarabia$\,^b$}\\
{\small $\,^a$Department of Quantitative Methods in Economics and TIDES Institute}\\[-0.2cm]
{\small University of Las Palmas de Gran Canaria, Spain}\\[-0.2cm]
{\small $\,^b$Department of Economics, University of Cantabria, Santander, Spain}\\[-0.2cm]
}

\date{}


\begin{document}

\maketitle

\begin{abstract}
\noindent New expressions and bounds for Catalan's and Apery's constants, derived from the half hyperbolic secant distribution, are presented. These constants are obtained by using expressions for the Lorenz curve, the Gini and Theil indices, convolutions and a mixture of distributions, among other approaches. The new expressions are presented both in terms of integral (simple and double) representation and also as an interesting series representation. Some of these features are well known, while others are new. In addition, some integral representations of Euler's numbers are obtained.

\vspace{-0.5cm}
\paragraph{Keywords:} {Apery's Constant; Catalan's Constant; Euler Numbers; Gini Index; Half-hyperbolic secant distribution, Lorenz Curve}

\end{abstract}

\section{Introduction}
Among the important special functions in analytic number theory, the Dirichlet beta function given by
\begin{eqnarray}
\beta(\gamma)=\sum_{k=0}^{\infty}\frac{(-1)^k}{(1+2k)^{\gamma}}=\frac{1}{\Gamma(\gamma)}\int_0^{\infty}\frac{x^{\gamma-1} e^{-x}}{1+e^{-2x}}\,dx,\quad \gamma>0\label{beta}
\end{eqnarray}
appears to be a generalisation of Catalan's constant, defined as $\beta(2)$. Hence, the constant is given by
\begin{eqnarray}
{\bf G}=\beta(2)=\sum_{k=0}^{\infty}\frac{(-1)^k}{(1+2k)^2},\label{catalan}
\end{eqnarray}
with an approximate value of  ${\bf G}\approx 0.915965$. As is well known, Catalan's constant can be represented through many rational series and integral representations. Moreover, this constant appears in numerous scientific disciplines, including topology, number theory and statistical mechanics. The most basic integral representation of ${\bf G}$ is given by \citetext{\citealp{jamesonandlord_2017}}
\begin{eqnarray}
{\bf G}=\int_0^1\frac{\tan^{-1} x}{x}\,dx,\label{ir}
\end{eqnarray}
where $\tan^{-1}$ represents the inverse circular tangent function and which is obtained easily from the alternating series representation of the $\tan^{-1}$ function.
\begin{eqnarray}
\tan^{-1}(z)=\sum_{k=0}^{\infty}(-1)^k\frac{x^{1+2 k}}{1+2k}.\label{srt}
\end{eqnarray}

However, a simple integral representation can also be derived from \eqref{beta} assuming $\gamma=2$. The integral \eqref{ir} was extensively studied by Ramanujan. Some other simple integral representations of Catalan's constant can be found in \cite{bradley_2001}, \cite{ferrettietal_2020} and \cite{jamesonandlord_2017}.
The Catalan constant was studied by Ramanujan, by considering several acceleration formulas \citetext{see \citealp{berndt_1985} and \citealp{ramanujan_1915}}. This problem has also been studied by \cite{bradley_1999}. On the other hand, Ramanujan also provided acceleration formulas for Ap\'ery's constant \citetext{see \citealp{berndt_1989}}.

Euler's numbers, also known as the secant numbers or zig numbers, are defined as the coefficients of the power series \citetext{\citealp{becerraanddeeba_2008}}
\begin{eqnarray*}
\sec z = \sum_{k=0}^{\infty}(-1)^k\frac{E_{2k}z^{2k}}{(2k)!},
\end{eqnarray*}
where $\sec(\cdot)$ is the circular secant function \citetext{see for instance, \citealp[p.85]{abramowitzandstegun_1964} and \citealp{borweinetal_1989}}. In particular, $E_0=1$, $E_2= -1$, $E_4=5$, $E_6=-61,\dots$, while $E_k=0$ when $k$ is odd.

Because it will be used later, we recall that for a random variable $X>0$ with cdf $F_X(x)=\Pr(X\leq x)$ its mean (if it exists) can be computed as
\begin{eqnarray}
E_X(X) &=& \int_0^{\infty}(1-F_X(t))\,dt, \label{m1}\\
E_X(X) &=& \int_0^{1}F_X^{-1}(t)\,dt, \label{m2}
\end{eqnarray}
and all the raw moments (if they exist) can be computed as
\begin{eqnarray}
E_X(X^r)=\int_0^{1}[F_X^{-1}(t)]^r\,dt,\quad r=1,2,\dots\label{rm}
\end{eqnarray}

The aim of this paper is to obtain integral (simple and double) representations of Catalan's and Appery's constants. Some are known in the literature but most are new. Fundamentally, these expressions are obtained by using, on the one hand, properties of the half hyperbolic secant distribution and, on the other, inequality measures obtained for this distribution that are commonly used in Economics, especially in the context of income distribution. Tools based on convolution and a mixture of distributions are also used to derive representations of Catalan's constant.

Some integral representations of Euler's numbers are also obtained. In addition, and collaterally, some rational series that may be useful in the literature are obtained. One of these enables us to obtain a new representation of $\xi(2)$ that generates its numerical value $\pi^2/6$, a longstanding question known as the Basel problem.

The rest of the paper is structured as follows. In Section \ref{s2}, we introduce the half hyperbolic secant distribution, showing that some of its features are associated with Catalan's constant and that the even moments of this distribution coincide with Euler's numbers. The convolution of two half hyperbolic secant distributions provides other representations of Catalan's constant and a new representation of $\xi(2)$. Section \ref{s3} focuses on measures of inequality and their relation with Catalan's and Appery's constants. A mixture of distributions in which the half hyperbolic secant distribution acts as the mixing distribution is studied in Section \ref{s4}. This tool provides a new representation of Catalan's constant regarding series and integrals. By restricting the support of the half hyperbolic distribution to the interval $(0,1)$, we produce additional representations of the constant.

\section{Half hyperbolic secant distribution}\label{s2}
We begin by considering the half hyperbolic secant distribution, which is the half (positive) version of the hyperbolic secant distribution \citetext{\citealp{ding_2014} and \citealp{holst_2013}}, a distribution associated with the Cauchy distribution \citetext{\citealp{manoukianandnadeau_1988}}.
The half hyperbolic secant distribution has a probability density function (pdf) and cumulative distribution function (cdf) given by
\begin{eqnarray}
f_X(x) &=& \mbox{sech}\left(\frac{\pi x}{2}\right)=\frac{2}{e^{\pi x/2}+e^{-\pi x/2}},\quad x\geq 0,\label{pdfhshd}\\
F_X(x) &=& \Pr(X\leq x)=\frac{2}{\pi}\tan^{-1}\left(\sinh\left(\frac{\pi x}{2}\right)\right)=\frac{4}{\pi}\tan^{-1}\left(e^{\pi x/2}\right)-1,\quad x\geq 0,\nonumber\\\label{inverse}
\end{eqnarray}
and zero otherwise, respectively. Here, $\sinh$ denotes the hyperbolic sine function and $\mbox{sech}$  the hyperbolic secant function. Although not with this name, the pdf \eqref{pdfhshd} has been considered in the statistical literature by \cite{baten_1934} and \cite[p.148]{johnsonetal_1995}. Note that the cdf is related to the Gudermannian function \citetext{(\citealp{mcculley_1957}}, $\phi$, which is defined as
\begin{eqnarray*}
\phi=\mbox{gd}\,\psi =\int_0^{\psi} \mbox{sech}\,t\,dt=\tan^{-1}(\sinh \psi).
\end{eqnarray*}

Let us recall, too, the series expansions of the secant hyperbolic function \citetext{\citealp{becerraanddeeba_2008}, \citealp{weiandqi_2015}}
\begin{eqnarray}
\mbox{sech}(z) &=& 2 e^{-z}\sum_{k=0}^{\infty}(-1)^k e^{-2kz},\quad z>0,\label{sh}\\
\mbox{sech}(z) &=& \sum_{k=0}^{\infty}\frac{E_{2k} z^{2k}}{(2k)!},\quad |z|<\pi/2,\label{en}
\end{eqnarray}
which will be useful in the rest of the paper.

The next result gives the raw moments of the pdf \eqref{pdfhshd}.
\begin{theorem} Let the random variable $X$ have the half hyperbolic secant distribution with pdf \eqref{pdfhshd}. Then its raw moments are given by
\begin{eqnarray}
E_X(X^a)=\frac{\Gamma(1+a)}{\pi (2\pi)^a}\left[\xi\left(1+a,\frac{1}{4}\right)-\xi\left(1+a,\frac{3}{4}\right)\right],\quad a>-1,\label{moments}
\end{eqnarray}
where $\xi(s,m)=\sum_{k=0}^{\infty}(k+m)^{-s}$ gives the generalised zeta function.
\end{theorem}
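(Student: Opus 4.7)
The plan is to compute the integral $E_X(X^a) = \int_0^\infty x^a \,\mathrm{sech}(\pi x/2)\,dx$ directly by expanding the hyperbolic secant via the geometric series representation \eqref{sh} already stated in the paper, and then recognising the resulting alternating series as a difference of two Hurwitz zeta functions.

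First, I would substitute $z=\pi x/2$ into \eqref{sh} to obtain
\begin{equation*}
\mathrm{sech}\!\left(\frac{\pi x}{2}\right) = 2 e^{-\pi x/2}\sum_{k=0}^{\infty}(-1)^k e^{-k\pi x},\qquad x>0,
\end{equation*}
and substitute into the definition of the moment. Interchanging sum and integral (justified for $a>-1$ by dominated convergence after pairing consecutive terms, since the absolute series sums to $1/(1-e^{-\pi x})$ and the resulting integral of $x^a e^{-\pi x/2}/(1-e^{-\pi x})$ is finite), each term becomes a standard gamma integral:
\begin{equation*}
\int_0^{\infty} x^a e^{-(k+1/2)\pi x}\,dx = \frac{\Gamma(1+a)}{[(k+1/2)\pi]^{1+a}}.
\end{equation*}
This yields the preliminary expression
\begin{equation*}
E_X(X^a) = \frac{2\,\Gamma(1+a)}{\pi^{1+a}}\sum_{k=0}^{\infty}\frac{(-1)^k}{(k+1/2)^{1+a}}.
\end{equation*}

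Next, I would convert the alternating series to the stated Hurwitz-zeta form by splitting into even and odd indices $k=2j$ and $k=2j+1$, giving denominators $(2j+1/2)^{1+a}=2^{1+a}(j+1/4)^{1+a}$ and $(2j+3/2)^{1+a}=2^{1+a}(j+3/4)^{1+a}$ respectively. Pulling out the common factor $2^{-(1+a)}$ leaves exactly $\xi(1+a,1/4)-\xi(1+a,3/4)$, and combining with the factor $2\Gamma(1+a)/\pi^{1+a}$ produces the required $\Gamma(1+a)/[\pi(2\pi)^{a}]$. The condition $a>-1$ is inherited from the integrability of $x^a$ near the origin (the exponential factor handles infinity).

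The only real obstacle is the interchange of summation and integration, since the series is alternating; but the pair-grouping trick above reduces it to an integral with a positive integrand, so it is routine. Everything else is bookkeeping of powers of $2$ and $\pi$.
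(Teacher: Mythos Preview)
Your proposal is correct and follows essentially the same route as the paper: expand $\mathrm{sech}(\pi x/2)$ via the geometric-series identity \eqref{sh}, integrate term by term using the gamma integral, and then split the alternating series $\sum_k (-1)^k/(1+2k)^{1+a}$ into even and odd indices to produce $\xi(1+a,1/4)-\xi(1+a,3/4)$. The only difference is cosmetic (you write the exponent as $(k+1/2)\pi$ rather than $\tfrac{\pi}{2}(1+2k)$), and you add an explicit justification for the sum--integral interchange that the paper omits.
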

\begin{proof} From \eqref{sh} we have
\begin{eqnarray}
E_X(X^a) &=& \int_0^{\infty}x^a \mbox{sech}\left(\frac{\pi x}{2}\right)\,dx=  2\int_0^{\infty}x^a e^{-\frac{\pi x}{2}}
\sum_{k=0}^{\infty}(-1)^k \exp\left(-2\frac{\pi x}{2}\right)\,dx\nonumber\\
& = & 2\sum_{k=0}^{\infty}(-1)^k\int_0^{\infty} x^a e^{-\frac{\pi x}{2}(1+2k)}\,dx= \frac{2\Gamma(1+a)}{\left(\frac{\pi}{2}\right)^{a+1}}\sum_{k=0}^{\infty} \frac{(-1)^k}{(1+2k)^{a+1}},\label{GG}
\end{eqnarray}
from which, taking into account that
\begin{eqnarray*}
\sum_{k=0}^{\infty} \frac{(-1)^k}{(1+2k)^{a+1}} &=& \sum_{k=0}^{\infty}\frac{1}{(1+4k)^{a+1}}-
\sum_{k=0}^{\infty}\frac{1}{(3+4k)^{a+1}}\\
&=& \sum_{k=0}^{\infty}\frac{4^{-(a+1)}}{(k+1/4)^{a+1}}-
\sum_{k=0}^{\infty}\frac{4^{-(a+1)}}{(k+3/4)^{a+1}}\\
&=& \frac{1}{2^{2+2 a}}\left[\xi\left(1+a,\frac{1}{4}\right)-\xi\left(1+a,\frac{3}{4}\right)\right]
\end{eqnarray*}
and after simple algebra, the result is obtained.
\end{proof}

Note that from \eqref{GG} by taking $a=0$ and taking into account that \eqref{pdfhshd} is a pdf, it is straightforward to see that
\begin{eqnarray*}
\frac{\pi}{4}=\sum_{k=0}^{\infty}\frac{(-1)^k}{1+2k},
\end{eqnarray*}
which is the Leibniz formula for $\pi$.

A few integral representations of Euler's numbers have been reported previously. One was provided by \cite{beesley_1969} and is given by
\begin{eqnarray*}
E_{2r}=i^{-2r}\left(\frac{2}{\pi}\right)^{2r+1}\int_0^{\infty}\frac{\log^{2r}(x)}{1+x^2}\,dx,\quad r=1,2,\dots
\end{eqnarray*}
where $i=\sqrt{-1}$.

Since the even moments of the pdf \eqref{pdfhshd} are related to Euler's numbers, two new integral representations can be obtained. To do so, first we need the following result.
\begin{theorem} It holds that
\begin{eqnarray*}
E_{2r}&=& (-1)^r E_X(X^{2r}),\quad r=1,2,\dots
\end{eqnarray*}
\end{theorem}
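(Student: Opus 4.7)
The plan is to reuse the computation already carried out inside Theorem 1. In deriving \eqref{moments}, the intermediate step \eqref{GG} established
\begin{equation*}
E_X(X^{a}) = \frac{2\,\Gamma(1+a)}{(\pi/2)^{a+1}} \sum_{k=0}^{\infty}\frac{(-1)^{k}}{(1+2k)^{a+1}},
\end{equation*}
which is valid for every $a>-1$. Specialising to $a=2r$ and recognising the alternating sum from \eqref{beta} as the Dirichlet beta function $\beta(2r+1)$, I obtain the compact form
\begin{equation*}
E_X(X^{2r}) = \frac{2\,(2r)!}{(\pi/2)^{2r+1}}\,\beta(2r+1).
\end{equation*}

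The second step invokes the classical closed-form evaluation of $\beta$ at odd positive integers,
\begin{equation*}
\beta(2r+1) = \frac{(-1)^{r} E_{2r}}{2\,(2r)!}\left(\frac{\pi}{2}\right)^{2r+1}, \qquad r=0,1,2,\dots,
\end{equation*}
which is listed, for instance, in Abramowitz and Stegun and is already implicit in the use of Euler numbers made in the paper. Plugging this identity into the expression for $E_X(X^{2r})$ the factors $(2r)!$ and $(\pi/2)^{2r+1}$ cancel cleanly, leaving $E_X(X^{2r})=(-1)^{r}E_{2r}$. Multiplying by $(-1)^{r}$ gives the stated identity $E_{2r}=(-1)^{r}E_X(X^{2r})$.

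The only non-trivial ingredient is the closed form for $\beta(2r+1)$. If one wished to avoid simply citing it, the cleanest self-contained derivation is via the partial-fraction expansion of $\sec$,
\begin{equation*}
\sec z \;=\; \pi \sum_{k=0}^{\infty}\frac{(-1)^{k}(2k+1)}{(\pi/2)^{2}(2k+1)^{2}-z^{2}},
\end{equation*}
expanded as a geometric series in $z$ and matched term-by-term with the defining Taylor series \eqref{en} of $\sec z$; this directly produces the $\beta(2r+1)$ formula above. Thus the main obstacle is more expository than technical: one must decide whether to cite the Euler-number / Dirichlet-beta link or to derive it in situ. Either way, the only substantive calculation is the algebraic cancellation at the end, which is routine.
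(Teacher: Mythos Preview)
Your proof is correct and follows essentially the same route as the paper: both start from the series expression \eqref{GG} with $a=2r$ and then invoke the classical identity linking the alternating sum $\sum_{k\ge 0}(-1)^k(2k+1)^{-(2r+1)}$ to the Euler numbers. The paper simply states that the resulting expression ``corresponds to a series representation of Euler's numbers,'' whereas you make this step explicit by naming it $\beta(2r+1)$ and citing its closed form; the underlying argument is the same.
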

\begin{proof} By using the representation given in \eqref{sh} it is evident that
\begin{eqnarray*}
(-1)^rE_X(X^{2r})= (-1)^r2\left(\frac{2}{\pi}\right)^{1+2r}\sum_{k=0}^{\infty}(-1)^k\frac{\Gamma(1+2r)}{(1+2k)^{1+2r}},\quad r=1,2,\dots
\end{eqnarray*}

However, this corresponds to a series representation of Euler's numbers, $E_{2r}$.
\end{proof}

From the above, we can now express the following integral representation of Euler's numbers, using \eqref{rm} together with the inverse of the cdf given in \eqref{inverse}.
\begin{eqnarray*}
E_{2r} &=& (-1)^r\int_0^{\infty}x^{2r}\mbox{sech}\left(\frac{\pi x}{2}\right)\,dx = (-1)^r\int_0^{1} \left[\frac{2}{\pi}\log\left(\tan\left(\frac{\pi}{4}(x+1)\right)\right)\right]^{2r}\,dx\\
&=& (-1)^r\int_0^{1} \left[\frac{2}{\pi}\sinh^{-1}\left(\tan\left(\frac{x \pi}{2}\right)\right)\right]^{2r}\,dx.
\end{eqnarray*}
The following lemma is an important precursor of the result obtained in the corollary.
\begin{lemma}
It holds that
\begin{eqnarray}
\sum_{k=0}^{\infty}\frac{(-1)^k k}{(1+2k)^3}=\frac{1}{64}(32{\bf G}-\pi^3).\label{lema}
\end{eqnarray}
\end{lemma}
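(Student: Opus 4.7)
The plan is to prove the identity by a simple algebraic decomposition of the summand that reduces the sum to two well-known Dirichlet beta values. Specifically, I would write $k=\tfrac{1}{2}\bigl((1+2k)-1\bigr)$ so that
\begin{eqnarray*}
\frac{k}{(1+2k)^3}=\frac{1}{2}\left[\frac{1}{(1+2k)^2}-\frac{1}{(1+2k)^3}\right],
\end{eqnarray*}
which is valid for every $k\geq 0$. Multiplying by $(-1)^k$ and summing termwise (the series converges absolutely after the split, being dominated by $\sum 1/(1+2k)^2$), this yields
\begin{eqnarray*}
\sum_{k=0}^{\infty}\frac{(-1)^k k}{(1+2k)^3}
=\frac{1}{2}\sum_{k=0}^{\infty}\frac{(-1)^k}{(1+2k)^2}-\frac{1}{2}\sum_{k=0}^{\infty}\frac{(-1)^k}{(1+2k)^3}
=\frac{1}{2}\,\beta(2)-\frac{1}{2}\,\beta(3).
\end{eqnarray*}

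Next, I would invoke the two closed forms needed. The first sum is exactly $\beta(2)={\bf G}$ by the defining expression \eqref{catalan}. The second is the classical evaluation $\beta(3)=\pi^3/32$, which follows from the general formula $\beta(2n+1)=\frac{(-1)^n E_{2n}}{2(2n)!}\left(\frac{\pi}{2}\right)^{2n+1}$ specialised to $n=1$ (using $E_2=-1$), and which the paper can cite as a standard fact from the same references used in the introduction.

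Substituting these values gives
\begin{eqnarray*}
\sum_{k=0}^{\infty}\frac{(-1)^k k}{(1+2k)^3}=\frac{{\bf G}}{2}-\frac{\pi^3}{64}=\frac{32{\bf G}-\pi^3}{64},
\end{eqnarray*}
which is precisely the claim. There is essentially no obstacle here: the only nontrivial input is the closed form $\beta(3)=\pi^3/32$, so the proof is entirely a one-line algebraic rearrangement followed by recognition of $\beta(2)$ and $\beta(3)$.
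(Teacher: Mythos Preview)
Your proof is correct and follows essentially the same route as the paper: both split $k=\tfrac{1}{2}\bigl((1+2k)-1\bigr)$ to reduce the sum to $\tfrac{1}{2}\beta(2)-\tfrac{1}{2}\beta(3)$, then identify $\beta(2)={\bf G}$ and $\beta(3)=\pi^3/32$. The only cosmetic difference is that the paper expresses $\beta(3)$ via the Hurwitz zeta difference $\tfrac{1}{128}\bigl[\xi(3,1/4)-\xi(3,3/4)\bigr]$ rather than citing the Euler-number formula directly.
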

\begin{proof} We have
\begin{eqnarray*}
\sum_{k=0}^{\infty}\frac{(-1)^k k}{(1+2k)^3} &=& \frac{1}{2}\left[\sum_{k=0}^{\infty}\frac{(-1)^k }{(1+2k)^2}-\sum_{k=0}^{\infty}\frac{(-1)^k }{(1+2k)^3}\right]\\
&=& \frac{1}{2}{\bf G}-\frac{1}{128}\left[\xi(3,1/4)-\xi(3,3/4)\right]=\frac{1}{2}{\bf G}-\frac{\pi^3}{64}.
\end{eqnarray*}

Hence the result.
\end{proof}

The mean and variance of the pdf given in \eqref{pdfhshd} can be written in terms of Catalan's constant, as we will see in the next result.
\begin{corollary} The mean and variance of the pdf given in \eqref{pdfhshd} are given by
\begin{eqnarray}
E_X(X) &=& \mu_X=8\,{\bf G}/\pi^2,\label{mean}\\
var_X(X) &=& 1-64\,{\bf G}^2/\pi^4=(1-\mu_X)(1+\mu_X).\nonumber
\end{eqnarray}
\end{corollary}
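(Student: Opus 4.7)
The plan is to specialise the general moment formula \eqref{GG} from Theorem 1 at $a=1$ and $a=2$; the corollary follows directly once the resulting Dirichlet beta values are identified.

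First, setting $a=1$ in \eqref{GG} gives
$$E_X(X) = \frac{2\,\Gamma(2)}{(\pi/2)^2}\sum_{k=0}^{\infty}\frac{(-1)^k}{(1+2k)^2} = \frac{8}{\pi^2}\,\beta(2) = \frac{8\mathbf{G}}{\pi^2},$$
where the series is recognised as $\beta(2)=\mathbf{G}$ by \eqref{catalan}. This establishes \eqref{mean}. Next, with $a=2$ in \eqref{GG},
$$E_X(X^2) = \frac{2\,\Gamma(3)}{(\pi/2)^3}\sum_{k=0}^{\infty}\frac{(-1)^k}{(1+2k)^3} = \frac{32}{\pi^3}\,\beta(3).$$
At this point I would invoke the classical closed form $\beta(3) = \pi^3/32$, which is the same evaluation implicitly used in the proof of Lemma 1 when the bracket $\tfrac{1}{128}[\xi(3,1/4)-\xi(3,3/4)]$ is replaced by $\pi^3/64$ (equivalently, $\xi(3,1/4)-\xi(3,3/4)=2\pi^3$). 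Substituting gives the clean value $E_X(X^2)=1$.

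Finally, the variance follows at once from the identity $\mbox{var}_X(X) = E_X(X^2) - \mu_X^2$, namely
$$\mbox{var}_X(X) = 1 - \left(\frac{8\mathbf{G}}{\pi^2}\right)^2 = 1 - \frac{64\mathbf{G}^2}{\pi^4},$$
and the alternative factorised form $(1-\mu_X)(1+\mu_X)$ is just the difference of squares applied to $1-\mu_X^2$.

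The only non-trivial ingredient, and hence the main (mild) obstacle, is the explicit evaluation $\beta(3)=\pi^3/32$; without it, the second moment would remain as $\tfrac{32}{\pi^3}\beta(3)$ and the elegant identity $E_X(X^2)=1$ would be hidden. Since the paper already relies on this evaluation inside Lemma 1, it is legitimate to cite it here, after which the corollary reduces to a one-line algebraic manipulation.
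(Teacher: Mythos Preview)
Your argument is correct. For the mean you proceed exactly as the paper does, specialising \eqref{GG} at $a=1$. For the second moment, however, the paper takes a slightly more roundabout path: it writes
\[
E_X(X^2)=\frac{32}{\pi^3}\sum_{k=0}^{\infty}\frac{(-1)^k(1+2k-2k)}{(1+2k)^3}
=\frac{32}{\pi^3}\Bigl[\mathbf{G}-2\sum_{k=0}^{\infty}\frac{(-1)^k k}{(1+2k)^3}\Bigr],
\]
and then invokes Lemma~1 (equation \eqref{lema}) to evaluate the remaining sum. Your route is more direct: you recognise the series immediately as $\beta(3)$ and plug in the classical value $\beta(3)=\pi^3/32$. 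As you observe, this is the same external input that the paper itself uses inside the proof of Lemma~1, so nothing new is being assumed; you simply bypass the detour through the auxiliary identity \eqref{lema}. The paper's organisation has the minor advantage of isolating \eqref{lema} as a stand-alone result, but for the purpose of the corollary your shortcut is cleaner.
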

\begin{proof}
The mean is obtained directly from \eqref{GG} by taking $a=1$ and taking into account \eqref{catalan}. To obtain the variance, we compute from \eqref{GG} the second raw moment, which is given by
\begin{eqnarray*}
E_X(X^2) &=& \frac{32}{\pi^3}\sum_{k=0}^{\infty}\frac{(-1)^k(1+2k-2k)}{(1+2k)^3}=\frac{32}{\pi^3}\left[{\bf G}-2\sum_{k=0}^{\infty} \frac{(-1)^k k}{(1+2k)^3}\right]\\
&=& \frac{32}{\pi^3}\left[{\bf G}-\frac{2}{64}(32{\bf G}-\pi^3)\right]=1,
\end{eqnarray*}
where we have incorporated \eqref{lema}. Now, the variance is computed from its definition, $var_X(X)=E_X(X^2)-\left(E_X(X)\right)^2$.
\end{proof}

Since the variance is positive, it follows that ${\bf G}<\pi^2/8$. Additionally, from \eqref{mean} and \eqref{m1} we can establish the following,
\begin{eqnarray}
{\bf G} &=& \frac{\pi^2}{8}\int_0^{\infty}x\, \mbox{sech}\left(\frac{\pi x}{2}\right)\,dx.\label{rep}\\
&=& \frac{\pi^2}{8}\int_0^{\infty}\left[1-\frac{2}{\pi}\tan^{-1}\left(\sinh\left(\frac{\pi x}{2}\right)\right)\right]\,dx\nonumber\\
&=&
\frac{\pi^2}{4}\int_0^{\infty}\left[1-\frac{2}{\pi}\tan^{-1}\left(e^{\frac{\pi x}{2}}\right)\right]\,dx.\nonumber
\end{eqnarray}

Furthermore, again from \eqref{mean} and by using \eqref{m2} we also establish
\begin{eqnarray*}
{\bf G} = \frac{\pi}{4}\int_0^1  \log\left(\tan\left(\frac{\pi}{4}(x+1)\right)\right)= \frac{\pi}{4}\int_0^1  \sinh^{-1}\left(\tan\left(\frac{\pi x}{2}\right)\right)\,dx.
\end{eqnarray*}

The expression given in \eqref{rep} is a reparameterisation of (5) provided in \cite{bradley_2001}.

If we introduce location and scale parameters in the pdf \eqref{pdfhshd}, we obtain another representation of Catalan's constant, which now depends on two parameters $\tau>0$ and $\sigma>0$, taking the form,
\begin{eqnarray*}
{\bf G} = \frac{1}{8}\left(\frac{\pi}{\sigma}\right)^2\left[\int_{\tau}^{\infty} x \,\mbox{sech}\left(\frac{\pi(x-\tau)}{2\sigma}\right)\,dx-\tau\right].
\end{eqnarray*}

By making use of representation \eqref{sh}, the moment generating function (mgf) can be represented as
\begin{eqnarray*}
M_X(t) &=& E_X(e^{t X})=2\sum_{k=0}^{\infty}(-1)^k\int_0^{\infty}e^{-x\left[\frac{\pi}{2}(1+2k)-t\right]}\,dx\\
&=& \frac{4}{\pi}\sum_{k=0}^{\infty}\frac{(-1)^k}{1+2k-2t/\pi}=\frac{2}{\pi}\Phi\left(-1,1,\frac{1}{2}-\frac{t}{\pi}\right),
\end{eqnarray*}
where $\Phi(z,s,a)=\sum_{k=0}^{\infty}z^k/(k+a)^s$ is the Lerch transcendent function.

By computing the derivative of $M_X(t)$ and evaluating this in $t=0$ we derive the mean $\mu_X$ {and hence}
\begin{eqnarray*}
{\bf G}= \frac{1}{16}\left[\psi^{\prime}(1/4)-\psi^{\prime}(3/4)\right],
\end{eqnarray*}
where $\psi^{\prime}(z)$ is the first derivative of the digamma function, which is a previously known result. It is also known (\cite{alzerandchoi_2017}) that
\begin{eqnarray*}
{\bf G}=\frac{1}{16}\left[\xi(2,1/4)-\xi(2,3/4)\right],
\end{eqnarray*}
Accordingly, it holds that
\begin{eqnarray*}
\psi^{\prime}(1/4)-\psi^{\prime}(3/4)=\xi(2,1/4)-\xi(2,3/4).
\end{eqnarray*}

Using Jensen's inequality it can be seen that $E(X^a)\geq (E(X))^a$ if $a<0$ or $a>1$ and $E(X^a)\leq (E(X))^a$ for $0<a<1$. Now, by using
\eqref{moments} and \eqref{mean} additional bounds for
Catalan's constant can be obtained.
\begin{theorem} Let $X_1$ and $X_2$ be independent random variables following the pdf \eqref{pdfhshd}. Then, the pdf of the random variable $Z=X_1+X_2$, which is the convolution of $X_1$ and $X_2$, is given by
\begin{eqnarray}
f_Z(z)=\frac{4}{\pi}\mbox{\normalfont csch}\left(\frac{\pi z}{2}\right)\log\left[\cosh\left(\frac{\pi z}{2}\right)\right],\quad z>0,\label{convolution}
\end{eqnarray}
where $\mbox{\normalfont csch}(z)$ gives the hyperbolic cosecant of $z$ and $\cosh(z)$ the hyperbolic cosine of $z$.
\end{theorem}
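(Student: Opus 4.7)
I would proceed by direct convolution, so the starting point is
\[
f_Z(z)=\int_0^z f_X(x)\,f_X(z-x)\,dx=\int_0^z \mbox{sech}\!\left(\tfrac{\pi x}{2}\right)\mbox{sech}\!\left(\tfrac{\pi(z-x)}{2}\right)dx.
\]
After the linear rescaling $u=\pi x/2$, $v=\pi z/2$, the task reduces to evaluating $I(v)=\int_0^v \mbox{sech}(u)\mbox{sech}(v-u)\,du$ in closed form; then $f_Z(z)=(2/\pi)\,I(\pi z/2)$. The goal is to reach $I(v)=2\log(\cosh v)/\sinh v$, since that instantly delivers the claimed expression for $f_Z(z)$.

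The first key step is a product-to-sum identity. Using $\cosh A\cosh B=\tfrac12[\cosh(A+B)+\cosh(A-B)]$ with $A=u$, $B=v-u$ gives
\[
\mbox{sech}(u)\mbox{sech}(v-u)=\frac{2}{\cosh v+\cosh(2u-v)}.
\]
Substituting $w=2u-v$ converts the integral into a symmetric one:
\[
I(v)=\int_{-v}^{v}\frac{dw}{\cosh v+\cosh w}.
\]

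The second and technically heaviest step is the antiderivative of $1/(a+\cosh w)$ for $a=\cosh v>1$. I would apply the Weierstrass-type substitution $t=\tanh(w/2)$, under which $\cosh w=(1+t^2)/(1-t^2)$ and $dw=2\,dt/(1-t^2)$. The integrand collapses to $2\,dt/[(a+1)-(a-1)t^2]$, which integrates to $\frac{2}{\sqrt{a^2-1}}\tanh^{-1}\!\Bigl(\sqrt{\tfrac{a-1}{a+1}}\,t\Bigr)$. With $a=\cosh v$ I would use $\sqrt{a^2-1}=\sinh v$ and $\sqrt{(a-1)/(a+1)}=\tanh(v/2)$, and the odd symmetry of $\tanh^{-1}$ in $w$ to evaluate between $-v$ and $v$, yielding
\[
I(v)=\frac{4}{\sinh v}\,\tanh^{-1}\!\bigl(\tanh^2(v/2)\bigr).
\]

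The final step is the identity $\tanh^{-1}(\tanh^2(v/2))=\tfrac12\log(\cosh v)$, which follows from $1-\tanh^2(v/2)=\mbox{sech}^2(v/2)$ together with $1+\tanh^2(v/2)=\cosh v/\cosh^2(v/2)$ (using $\cosh v=\cosh^2(v/2)+\sinh^2(v/2)$), so the ratio inside $\tfrac12\log$ simplifies to $\cosh v$. Substituting back gives $I(v)=2\log(\cosh v)/\sinh v$ and hence the stated formula. I expect the main obstacle to be keeping track of constants through the Weierstrass substitution; once that integral is evaluated, the rest is straightforward hyperbolic algebra.
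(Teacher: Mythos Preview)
Your proof is correct and complete; every step checks out, including the constant bookkeeping through the Weierstrass substitution and the final hyperbolic simplification.

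Your route, however, is genuinely different from the paper's. The paper attacks the convolution integral by writing both $\mbox{sech}$ factors in exponential form and substituting $u=e^{\pi x/2}$, which turns the integrand into the rational function $u/[(u^2+1)(u^2+e^{\pi z})]$; a one-line partial-fraction decomposition then gives two logarithms that combine into $\log\cosh(\pi z/2)$. You instead exploit the hyperbolic structure directly: the product-to-sum identity $\cosh A\cosh B=\tfrac12[\cosh(A+B)+\cosh(A-B)]$ immediately symmetrises the integral to $\int_{-v}^{v}dw/(\cosh v+\cosh w)$, after which the Weierstrass substitution $t=\tanh(w/2)$ reduces it to a standard $\tanh^{-1}$ form. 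The paper's approach is more mechanical and requires no special hyperbolic identities beyond the definition of $\mbox{sech}$; yours is slightly longer but more transparent, since the symmetrised integral $\int_{-v}^{v}dw/(a+\cosh w)$ is a recognisable tabulated form and the appearance of $\sinh v=\sqrt{a^2-1}$ in the denominator is natural rather than emerging from algebraic cancellation at the end.
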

\begin{proof} {Using the convolution formula we have}
\begin{eqnarray}
f_Z(z) &=& \int_0^z f_X(x)f_X(z-x)\,dx=4\int_0^z \frac{dx}{\left(e^{\frac{\pi x}{2}}+e^{-\frac{\pi x}{2}}\right)\left(e^{\frac{\pi(z-x)}{2}}+e^{-\frac{\pi(z-x)}{2}}\right)}\nonumber\\
&=&\frac{8e^{\frac{\pi z}{2}}}{\pi}\int_1^{e^{\frac{\pi z}{2}}}\frac{u\,du}{(u^2+1)(u^2+e^{\pi z})}\nonumber\\
&=& \frac{8}{\pi}\frac{e^{\frac{\pi z}{2}}}{e^{\pi z}-1}\int_1^{e^{\frac{\pi z}{2}}}\left(\frac{u}{u^2+1}-\frac{u}{u^2+e^{\pi z}}\right)\,du\nonumber\\
&=& \frac{4}{\pi}\frac{e^{\frac{\pi z}{2}}}{e^{\pi z}-1}\left[\log\left(\frac{1+e^{\pi z}}{2e^{\pi z}}\right)-\log\left(\frac{2}{1+e^{\pi z}}\right)\right]\nonumber\\
&=& \frac{4}{\pi}\frac{2}{e^{\frac{\pi z}{2}}-e^{-\frac{\pi z}{2}}}\log\left(\frac{1+e^{\pi z}}{2e^{\frac{\pi z}{2}}}\right).\label{bas}
\end{eqnarray}

Hence the result.
\end{proof}

Now, given that $E_Z(Z)=E_{X_1}(X_1)+E_{X_2}(X_2)=16{\bf G}/\pi^2$, we can perform the following representation of Catalan's constant,
\begin{eqnarray*}
{\bf G} = \frac{\pi}{4}\int_0^{\infty}z\,\mbox{csch}\left(\frac{\pi z}{2}\right)\log\left[\cosh\left(\frac{\pi z}{2}\right)\right]\,dz.
\end{eqnarray*}

Furthermore, we can directly prove that $E_Z(Z^2)=2\left(1+\frac{64{\bf G}^2}{\pi^4}\right)$. In consequence, we also have the following representation of Catalan's constant,
\begin{eqnarray*}
{\bf G}^2=\frac{\pi^4}{64}\left(\frac{2}{\pi}\int_0^{\infty}z^2\,\mbox{csch}\left(\frac{\pi z}{2}\right)\log\left[\cosh\left(\frac{\pi z}{2}\right)\right]\,dz-1\right).
\end{eqnarray*}

The next result is derived {from the fact that} \eqref{convolution} is a genuine pdf and provides a new representation for  $\xi(2)=\sum_{k=1}^{\infty}k^{-2}=\pi^2/6$.
\begin{theorem} It holds that
\begin{eqnarray*}
\xi(2)=\sum_{k=1}^{\infty}\frac{1}{k^2}=\frac{4}{3}\left[\frac{\pi^2}{16}+\sum_{k=0}^{\infty}\frac{\Phi(-1,1,k+3/2)}{1+2k}\right],
\end{eqnarray*}
where $\xi(s)=\sum_{k=1}^{\infty}k^{-s}$ is the Riemann zeta function.
\end{theorem}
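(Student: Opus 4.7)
The plan is to exploit the fact that $\int_0^\infty f_Z(z)\,dz=1$, since this is the only content supplied by calling \eqref{convolution} a ``genuine pdf.'' After the substitution $u=\pi z/2$, this normalization condition becomes the clean identity
\begin{equation*}
\int_0^{\infty}\operatorname{csch}(u)\log\cosh(u)\,du \;=\; \frac{\pi^2}{8}.
\end{equation*}
The strategy is to evaluate this integral in a second way, as a double series in which $\xi(2)$ and $\Phi(-1,1,k+3/2)$ appear naturally, and to equate the two expressions.

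Next I would expand the two factors using series that converge for $u>0$: from \eqref{sh}, $\operatorname{csch}(u)=2\sum_{k\ge 0}e^{-(2k+1)u}$, and from $\cosh u=\tfrac12 e^u(1+e^{-2u})$,
\begin{equation*}
\log\cosh(u)\;=\;u-\log 2+\sum_{j=1}^{\infty}\frac{(-1)^{j-1}}{j}e^{-2ju}.
\end{equation*}
Substituting and integrating term-by-term yields three pieces. The first gives $2\sum_{k\ge 0}(2k+1)^{-2}=\tfrac32\xi(2)$ (using only the elementary identity $\sum(2k+1)^{-2}=(1-\tfrac14)\xi(2)$, not the value $\pi^2/6$). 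The second and third pieces are individually divergent, but their divergent parts must cancel, which is the delicate step.

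To handle this cleanly I would apply the partial fraction
\begin{equation*}
\frac{1}{j(2k+2j+1)}\;=\;\frac{1}{2k+1}\left[\frac{1}{j}-\frac{2}{2k+2j+1}\right],
\end{equation*}
so that the inner $j$-sum in the third piece produces a $\log 2$ contribution which exactly cancels the $-2\log 2\sum_k(2k+1)^{-1}$ from the second piece (this cancellation is the main obstacle, since it requires justifying the exchange of summation and integration via a regularization or a uniform-convergence argument on a suitable rearrangement). After the cancellation, a shift of the summation index $j\mapsto j+1$ recognizes the remaining inner sum as $\tfrac12\Phi(-1,1,k+3/2)$.

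Collecting everything, the integral equals $\tfrac32\xi(2)-2\sum_{k\ge 0}\Phi(-1,1,k+3/2)/(2k+1)$. Setting this equal to $\pi^2/8$ and solving for $\xi(2)$ gives precisely
\begin{equation*}
\xi(2)\;=\;\frac{4}{3}\left[\frac{\pi^2}{16}+\sum_{k=0}^{\infty}\frac{\Phi(-1,1,k+3/2)}{1+2k}\right],
\end{equation*}
as required. The hard part is not the algebra but the rigorous justification of the termwise manipulations in the presence of conditionally convergent (indeed, individually divergent) subseries; an Abel-summation or dominated-convergence argument on partial sums $k\le N$ should suffice before taking $N\to\infty$.
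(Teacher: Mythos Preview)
Your proposal is correct and follows essentially the same route as the paper: both start from $\int_0^\infty f_Z(z)\,dz=1$, expand $\operatorname{csch}$ as a geometric series in decaying exponentials and $\log\cosh$ via $u-\log 2+\log(1+e^{-2u})$, integrate termwise, and use the same partial-fraction decomposition so that the $\log 2$ contributions cancel and the remaining inner sum is recognised as $\Phi(-1,1,k+3/2)$. Your substitution $u=\pi z/2$ merely tidies constants, and your explicit attention to the conditional-convergence issue is a point the paper glosses over.
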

\begin{proof} Note that from \eqref{bas} and using the series representation of the logarithm function and the fact that $(1-e^{-\pi z})^{-1}=\sum_{k=0}^{\infty}e^{-2\pi z k}$ it is obtained that
\begin{eqnarray*}
1 &=& \int_0^{\infty}f_Z(z)\,dz=
\int_0^{\infty} \frac{8}{\pi}\left\{\frac{e^{-\pi z/2}}{1-e^{-\pi z}}\left[\log(1+e^{-\pi z})-\log 2+\frac{\pi z}{2}\right]\right\}\,dz\\
&=&
\frac{8}{\pi}\sum_{k=0}^{\infty}\int_0^{\infty}e^{-\frac{\pi z}{2}(1+2k)}\left\{\sum_{n=1}^{\infty}(-1)^{n+1}\frac{e^{-\pi n z}}{n}-\log 2+\frac{\pi z}{2}\right\}\,dz\\
&=& \frac{16}{\pi^2}\sum_{k=0}^{\infty}\left\{\sum_{n=1}^{\infty}\frac{(-1)^{n+1}}{n(1+2(k+n))}-\frac{\log 2}{1+2k}+\frac{1}{(1+2k)^2}\right\}.
\end{eqnarray*}

On the other hand,
\begin{eqnarray*}
\sum_{n=1}^{\infty}\frac{(-1)^{n+1}}{n(1+2(k+n))} &=& \frac{1}{1+2k}\sum_{n=1}^{\infty}(-1)^{n+1}\left(\frac{1}{n}-\frac{1}{n+k+1/2}\right)\\
&=& \frac{1}{1+2k}\left[\log 2-\sum_{r=1}^{\infty}\frac{(-1)^{r}}{r+k+3/2}\right]\\
&=& \frac{\log 2-\Phi(-1,1,k+3/2)}{1+2k}.
\end{eqnarray*}

Thus,
\begin{eqnarray}
\frac{16}{\pi^2}\left\{\sum_{k=0}^{\infty}\frac{1}{(1+2k)^2}-\sum_{k=0}^{\infty}\frac{\Phi(-1,1,k+3/2)}{1+2k} \right\}=1.\label{bas1}
\end{eqnarray}

Now, since $\sum_{k=0}^{\infty}(1+2k)^{-2}=(3/4)\xi(2)$ \citetext{see for instance \citealp{holst_2013}}, by replacing this in \eqref{bas1} the desired result is readily obtained.
\end{proof}

It is well-known that $\sum_{k=0}^{\infty}(1+2k)^{-2}=\pi^2/8$ \citetext{see for example, \citealp{holst_2013} and \citealp{jamesonandlord_2017}}, and so it holds that
\begin{eqnarray*}
\sum_{k=0}^{\infty}\frac{\Phi(-1,1,k+3/2)}{1+2k}=\frac{\pi^2}{16}.\label{bas2}
\end{eqnarray*}

\subsection{Additional representations of Catalan's constant}
In this section, we obtain new representations of Catalan's constant, from monotonic transformations of the half hyperbolic secant random variable. For example, by assuming in \eqref{pdfhshd} the change of variable $X=-\log Y$ we can obtain a new pdf with support in $(0,1)$ given by
\begin{eqnarray*}
f_Y(y)= \frac{1}{y}\,\mbox{sech}\left(\frac{\pi}{2}\log y\right),\quad 0<y<1.
\end{eqnarray*}

Thus, by construction we have $E_Y(\log Y)=E_X(X)=8{\bf G}/{\pi^2}$ and therefore
\begin{eqnarray*}
{\bf G} = -\frac{\pi^2}{8}\int_0^1\frac{\log y}{y}\,\mbox{sech}\left(\frac{\pi}{2}\log y\right)\,dy.
\end{eqnarray*}

Similarly, by making the change of variable $X=Y/(1-Y)$ from \eqref{pdfhshd} we obtain the pdf
\begin{eqnarray*}
f_Y(y)=\frac{1}{(1-y)^2}\,\mbox{sech}\left(\frac{\pi y}{2(1-y)}\right),\quad 0<y<1
\end{eqnarray*}
and again, since $E_Y(Y/(1-Y))=E_X(X)$ we have
\begin{eqnarray}
{\bf G} = \frac{\pi^2}{8}\int_0^1\frac{y}{(1-y)^3}\,\mbox{sech}\left(\frac{\pi y}{2(1-y)}\right)\,dy.\label{trapezoidal}
\end{eqnarray}

The expression given in \eqref{trapezoidal} can approximate Catalan's constant using the trapezoidal rule. In fact, when the domain $(0,1)$ is discretised into $N$ equally spaced panels, we have
\begin{eqnarray*}
{\bf G} \approx \frac{\pi^2}{8 N}\left[\frac{f(x_N)+f(x_0)}{2}+\sum_{k=1}^{N-1}f(x_k)\right],
\end{eqnarray*}
where
\begin{eqnarray*}
f(x)=\frac{1}{(1-x)^3}\,\mbox{sech}\left(\frac{\pi x}{2(1-x)}\right),
\end{eqnarray*}
which provides an asymptotic error estimate given by
\begin{eqnarray*}
\mbox{Error } = -\frac{(x_N-x_0)^2}{12 N^2}\left[f^{\prime}(x_N)-f^{\prime}(x_0)\right]+O(N^{-3}),\quad N\to \infty.
\end{eqnarray*}

This procedure can also be applied to \eqref{ir} or another representation through an integral when the support of the distribution is bounded.

By taking the change of variable $X=1/Y$, we obtain the pdf
\begin{eqnarray}
f_Y(y)=\frac{1}{y^2}\,\mbox{sech}\left(\frac{\pi}{2 y}\right),\quad y>0,\label{ihs}
\end{eqnarray}
and $E_Y(1/Y)=E_X(X)$. {In consequence we have}
\begin{eqnarray}
{\bf G} = \frac{\pi^2}{8}\int_0^{\infty}\frac{1}{y^3}\,\mbox{sech}\left(\frac{\pi}{2 y}\right)\,dy.\label{mihs}
\end{eqnarray}

Finally, by taking the change of variable $X=\log(e^y+1)$ we have the pdf given by
\begin{eqnarray*}
f_Y(y)=\frac{e^y}{e^y+1}\,\mbox{sech}\left(\frac{\pi}{2}\log(e^y+1)\right),\quad -\infty<y<\infty.
\end{eqnarray*}

Now, {since} $E_Y(\log(e^{y}+1))=E_X(X)$ we obtain
\begin{eqnarray*}
{\bf G}=\frac{\pi^2}{8}\int_{-\infty}^{\infty}\frac{e^y\log(e^y+1)}{e^y+1}\,\mbox{sech}\left(\frac{\pi}{2}\log(e^y+1)\right)\,dy.
\end{eqnarray*}

\section{Catalan's constant from measures of inequality}\label{s3}
In studies of income distribution, the Lorenz curve (LC), which informs about the cumulative proportion of income held by the bottom $p$ percent of the population, is an instrument of crucial importance. Offering a complete picture of the concentration in the distribution, this graphical tool has become popular in areas beyond {economics and} social sciences, extending to fields such as bibliometrics and physics.

Let $X$ be a random variable with support in the subset of the positive real number, say ${\cal L}$, with cdf $F_X(x)$, inverse distribution function $F_X^{-1}(x)=\sup\left\{y: F_X(y)\leq x\right\}$ { and positive} finite expectation $\mu_X$. Then, the
LC of $X$ is defined (see \cite{gastwith_1971}) as
\begin{equation}\label{defbas}
L_X(p)=\frac{1}{\mu_X}\int_0^pF_X^{-1}(y)\,dy,\quad 0\le p\le 1.
\end{equation}

The result provides the LC associated with the pdf \eqref{pdfhshd}.
\begin{theorem}
The Lorenz curve associated with the pdf given in \eqref{pdfhshd} is given by
\begin{eqnarray}
L_X(p)= \frac{1}{\mu_X}\left[(1+p)F_X^{-1}(p)-\frac{2 z(p)}{\pi^2}\Phi(-z(p)^2,2,1/2)\right]+1,\label{lchhsd}
\end{eqnarray}
where
\begin{eqnarray*}
F_X^{-1}(p) &=& \frac{2}{\pi}\log\left[z(p)\right],\\
z(p) &=& \tan\left(\frac{\pi}{4}(p+1)\right)
\end{eqnarray*}
and $\mu_X$ is given in \eqref{mean}.
\end{theorem}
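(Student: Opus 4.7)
The plan is to push the definition $L_X(p)=\mu_X^{-1}\int_0^p F_X^{-1}(y)\,dy$ through a truncated first moment of $X$. Inverting \eqref{inverse} yields $F_X^{-1}(p)=(2/\pi)\log z(p)$ with $z(p)=\tan(\pi(p+1)/4)$; I will also use the consequences $e^{\pi F_X^{-1}(p)/2}=z(p)$ and, from $\tan^{-1}(u)+\tan^{-1}(1/u)=\pi/2$, the identity $\tan^{-1}(1/z(p))=\pi(1-p)/4$.

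First I change variables $y=F_X(x)$ to rewrite $\mu_X L_X(p)=\int_0^{x_p} x f_X(x)\,dx$ with $x_p:=F_X^{-1}(p)$. Splitting as $\mu_X-\int_{x_p}^\infty x f_X(x)\,dx$ and integrating the tail by parts (the boundary term at infinity vanishes by the exponential decay of $1-F_X$) gives
\begin{eqnarray*}
\mu_X L_X(p)=\mu_X-(1-p)\,x_p-\int_{x_p}^\infty(1-F_X(x))\,dx.
\end{eqnarray*}
From \eqref{inverse} one has $1-F_X(x)=(4/\pi)\tan^{-1}(e^{-\pi x/2})$. Expanding the arctangent by the Maclaurin series \eqref{srt}, valid because $e^{-\pi x/2}\le 1$, and integrating termwise, every exponential collapses to $z(p)^{-(1+2k)}$ and produces
\begin{eqnarray*}
\int_{x_p}^\infty(1-F_X(x))\,dx=\frac{8}{\pi^2}\sum_{k=0}^\infty\frac{(-1)^k z(p)^{-(1+2k)}}{(1+2k)^2}=\frac{8}{\pi^2}\,\mathrm{Ti}_2\bigl(1/z(p)\bigr),
\end{eqnarray*}
where $\mathrm{Ti}_2(u)=\sum_{k\ge 0}(-1)^k u^{1+2k}/(1+2k)^2$ is the inverse tangent integral.

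To arrive at the stated form I invoke Ramanujan's functional equation $\mathrm{Ti}_2(u)-\mathrm{Ti}_2(1/u)=(\pi/2)\log u$ for $u>0$, which follows by differentiating and using $\tan^{-1}(u)+\tan^{-1}(1/u)=\pi/2$, together with $\mathrm{Ti}_2(1)=\mathbf{G}$ fixing the constant of integration. Applied at $u=z(p)$, and observing $(\pi/2)\log z(p)=(\pi^2/4)x_p$, it rewrites $\mathrm{Ti}_2(1/z(p))$ as $\mathrm{Ti}_2(z(p))-(\pi^2/4)x_p$; the leftover $-(1-p)x_p+2x_p$ collapses to $(1+p)x_p$, while $\mathrm{Ti}_2(z(p))=(z(p)/4)\Phi(-z(p)^2,2,1/2)$ supplies the Lerch piece $-(2z(p)/\pi^2)\Phi(-z(p)^2,2,1/2)$. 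Dividing by $\mu_X$ delivers \eqref{lchhsd}.

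The main obstacle is this last rewriting. The series integration naturally produces the Lerch transcendent at the convergent argument $-z(p)^{-2}$, whereas \eqref{lchhsd} displays it at the (generically divergent) argument $-z(p)^2$; the bridge between the two presentations is exactly Ramanujan's $\mathrm{Ti}_2$-inversion identity, which simultaneously flips the argument of $\Phi$ and reabsorbs an extra $2 F_X^{-1}(p)$ into the clean coefficient $(1+p)F_X^{-1}(p)$. Everything else is routine bookkeeping.
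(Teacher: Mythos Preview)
Your argument is correct. It differs from the paper's in one structural choice: the paper applies Anderson's inverse-function integration formula on $[0,F_X^{-1}(p)]$ and then expands $\tan^{-1}(e^{\pi t/2})$ termwise via \eqref{srt}, so that the series for $\mathrm{Ti}_2(z(p))$ appears directly, together with the constant term $\mathbf{G}$ from the lower endpoint $t=0$; you instead pass to the tail integral, expand $\tan^{-1}(e^{-\pi x/2})$ termwise (where the Maclaurin series is safely inside its disk of convergence), obtain $\mathrm{Ti}_2(1/z(p))$, and then flip to $\mathrm{Ti}_2(z(p))$ via Ramanujan's inversion identity. The net bookkeeping is the same---the $+1$ arises in the paper from the $8\mathbf{G}/\pi^2$ contributed by the lower limit, in your version from the $\mu_X$ you split off at the start---but your route has the advantage that the termwise integration is fully justified on $[x_p,\infty)$, whereas the paper's direct expansion uses \eqref{srt} with argument $e^{\pi t/2}\ge 1$, which is outside the radius of convergence and really needs to be read as the analytic continuation $\mathrm{Ti}_2$. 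The price you pay is the extra lemma (the $\mathrm{Ti}_2$ functional equation), which the paper avoids by landing on $z(p)^{1+2k}$ immediately.
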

\begin{proof}
From \eqref{defbas} we have that
\begin{eqnarray*}
\mu_X L_X(p) &=& \int_0^p F_X^{-1}(t)\,dt=p F_X^{-1}(p)-\int_{F_X^{-1}(0)}^{F_X^{-1}(p)}F_X(t)\,dt,
\end{eqnarray*}
where we have used the formula (1) in \cite{anderson_1970} which describes the integration of the inverse function. From this, we obtain
\begin{eqnarray*}
\mu_X L_X(p) &=& pF_X(p)-\int_0^{F_X^{-1}(p)}\left[\frac{4}{\pi}\tan^{-1}\left(e^{\frac{\pi t}{2}}\right)-1\right]\,dt\\
&=& pF_X(p)-\int_0^{F_X^{-1}(p)}\left[\frac{4}{\pi}\sum_{k=0}^{\infty}(-1)^k\frac{e^{\pi (1+2k) t/2}}{1+2k}-1\right]\,dt,
\end{eqnarray*}
where we use the series representation of the $\tan^{-1}$ function provided in \eqref{srt}. Thus,
\begin{eqnarray*}
\mu_X L_X(p) &=& pF_X(p)-\frac{4}{\pi}\sum_{k=0}^{\infty}\frac{(-1)^k}{1+2k}\int_0^{F_X^{-1}(p)}e^{\pi (1+2k) t/2}\,dt+F_X^{-1}(p)\\
&=& pF_X(p)-\frac{4}{\pi}\sum_{k=0}^{\infty}\frac{(-1)^k}{1+2k}\left.\frac{2e^{\pi (1+2k) t/2}}{\pi(1+2k)}\right|_0^{F_X^{-1}(p)}+F_X^{-1}(p)\\
&=& pF_X(p)-\frac{8}{\pi^2}\sum_{k=0}^{\infty}\frac{(-1)^k}{(1+2k)^2}\left[e^{\frac{\pi}{2}(1+2k)F_X^{-1}(p)}-1\right]+F_X^{-1}(p).
\end{eqnarray*}

Now, from $F_X^{-1}(p)=(2/\pi)\log(z(p))$ we have
\begin{eqnarray*}
\mu_X L_X(p) = pF_X(p)-\frac{8}{\pi^2}\sum_{k=0}^{\infty}\frac{(-1)^k}{(1+2k)^2}z(p)^{1+2k}+\frac{8{\bf G}}{\pi^2}+F_X^{-1}(p).
\end{eqnarray*}

Finally, taking into account that
\begin{eqnarray}
\sum_{k=0}^{\infty}\frac{(-1)^k}{(1+2k)^2}z(p)^{1+2k}=\frac{z(p)}{4}\sum_{k=0}^{\infty}\frac{(-z(p)^2)^k}{(k+1/2)^2}=\frac{z(p)}{4}\Phi(-z(p)^2,2,1/2),\nonumber\\\label{lerch}
\end{eqnarray}
the result follows after simple algebra.
\end{proof}

The Gini index \citetext{see among others, \citealp{kleiberandkotz_2003}} is one of the most important inequality measures used in economics. This index is calculated as twice the area between the egalitarian line and the LC. Thus, if $X$ is a random variable in ${\cal L}$ with Lorenz curve $L_X(p)$, a formula for the Gini index, $G(X)$, is
\begin{equation}
G(X)=2\int_0^1 \left[p- L_X(p)\right] dp=1-2\int_0^1L_X(p)\,dp.
\end{equation}

\begin{theorem}
The Gini index for the LC defined in \eqref{lchhsd} is given by
\begin{eqnarray}
G(X)=\frac{7\xi(3)}{2\pi{\bf G}}-1.\label{ginii}
\end{eqnarray}
\end{theorem}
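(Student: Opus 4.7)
The plan is first to trade the Lorenz-curve integral for the simpler moment $\int_0^1 p\,F_X^{-1}(p)\,dp$. Swapping the order of integration in $\mu_X\int_0^1 L_X(p)\,dp=\int_0^1\int_0^p F_X^{-1}(t)\,dt\,dp=\int_0^1(1-t)F_X^{-1}(t)\,dt$ gives
\begin{eqnarray*}
G(X)=1-2\int_0^1 L_X(p)\,dp=\frac{2}{\mu_X}\int_0^1 p\,F_X^{-1}(p)\,dp-1,
\end{eqnarray*}
so that, recalling $\mu_X=8{\bf G}/\pi^2$ from \eqref{mean}, the task reduces to evaluating $J:=\int_0^1 p\,F_X^{-1}(p)\,dp$.

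Next I would insert $F_X^{-1}(p)=(2/\pi)\log\tan(\pi(p+1)/4)$ and change variable $t=\pi(p+1)/4$, which moves the integral onto $[\pi/4,\pi/2]$:
\begin{eqnarray*}
J=\frac{32}{\pi^3}\int_{\pi/4}^{\pi/2}t\,\log\tan t\,dt-\frac{8}{\pi^2}\int_{\pi/4}^{\pi/2}\log\tan t\,dt.
\end{eqnarray*}
The second integral is exactly ${\bf G}$, as one reads off from the paper's earlier identity ${\bf G}=(\pi/4)\int_0^1\log\tan(\pi(x+1)/4)\,dx$. The genuine work therefore lies in evaluating $K:=\int_{\pi/4}^{\pi/2}t\,\log\tan t\,dt$.

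For $K$ I would use the Fourier expansion
\begin{eqnarray*}
\log\tan t=-2\sum_{m=0}^{\infty}\frac{\cos((4m+2)t)}{2m+1},\qquad 0<t<\pi/2,
\end{eqnarray*}
integrate term by term against $t$ on $[\pi/4,\pi/2]$ (which is legitimate by uniform convergence on compact subintervals of $(0,\pi/2)$), and evaluate the elementary primitive of $t\cos((4m+2)t)$ at both endpoints using $\sin((2m+1)\pi)=0$, $\cos((2m+1)\pi)=-1$, $\sin((2m+1)\pi/2)=(-1)^m$ and $\cos((2m+1)\pi/2)=0$. The contributions assemble into precisely the two characteristic series
\begin{eqnarray*}
\sum_{m=0}^{\infty}\frac{1}{(2m+1)^3}=\frac{7\xi(3)}{8}\quad\mbox{and}\quad\sum_{m=0}^{\infty}\frac{(-1)^m}{(2m+1)^2}={\bf G},
\end{eqnarray*}
so that $K=7\xi(3)/16+\pi{\bf G}/4$. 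Substituting back, the two ${\bf G}$-contributions in $J$ cancel exactly, leaving $J=14\xi(3)/\pi^3$, and therefore $G(X)=2J/\mu_X-1=7\xi(3)/(2\pi{\bf G})-1$, as claimed. The main technical obstacle is precisely the evaluation of $K$; the Fourier route above is the shortest I know, but splitting $\log\tan t=\log\sin t-\log\cos t$ and invoking the classical Fourier series for $\log\sin$ and $\log\cos$ would serve equally well.
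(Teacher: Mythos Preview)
Your proof is correct and follows essentially the same route as the paper: both reduce the Gini index to $\frac{2}{\mu_X}\int_0^1 pF_X^{-1}(p)\,dp-1$, change variables to put the integral on $[\pi/4,\pi/2]$, and then expand $\log\tan t$ in its Fourier cosine series and integrate term by term. The only cosmetic difference is that the paper keeps the factor $(4x/\pi-1)$ intact throughout the termwise integration, so that the ${\bf G}$-contributions cancel silently inside a single computation, whereas you split $J$ into the $K$-piece and the ${\bf G}$-piece and display the cancellation explicitly; the paper also reaches the starting identity via $G(X)=2\int_0^1 pL_X'(p)\,dp-1$ rather than Fubini, but this is the same integration by parts read the other way.
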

\begin{proof}
The Gini index can be computed as
\begin{eqnarray*}
G(X) &=& 2\int_{0}^{1}pL_X'(p)dp-1=\frac{2}{\mu_X}\int_{0}^{1}pF_X^{-1}(p)dp-1\\
& = & \frac{1}{\mu_X}\frac{4}{\pi}\int_{0}^{1}p\log\left(\tan\left(\frac{\pi(p+1)}{4}\right)\right)dp-1.
\end{eqnarray*}

Now, let
\begin{eqnarray*}\label{fgini}
I &=& \int_{0}^{1}p\log\left(\tan\left(\frac{\pi(p+1)}{4}\right)\right)dp=\frac{4}{\pi}\int_{\pi/4}^{\pi/2}\left(\frac{4x}{\pi}-1\right)\log(\tan x)\,dx\\
&=&  -\frac{8}{\pi}\sum_{k=0}^{\infty}\int_{\pi/4}^{\pi/2}\left(\frac{4x}{\pi}-1\right)\frac{\cos(2(1+2k)x)}{1+2k}\,dx,\quad x\in\left(0,\frac{\pi}{2}\right),
\end{eqnarray*}
where we use the series representation of the $\log(\tan(\cdot))$ function provided in \cite{elaissaouiandguennoun_2019}. Some straightforward computation then produces
\begin{eqnarray*}
I &=& \frac{4}{\pi^2}\sum_{k=0}^{\infty}\frac{2\cos(2k\pi)-2\sin(k\pi)+(1+2k)\pi\sin(2k\pi)}{(1+2k)^3}\\
&=& \frac{8}{\pi^2}\sum_{k=0}^{\infty}\frac{1}{(1+2k)^3}=\frac{7\xi(3)}{\pi^2}.
\end{eqnarray*}

The result then follows after simple algebra.
\end{proof}

Now, since $0<G(X)<1$, it is straightforward to obtain
\begin{eqnarray*}
\frac{7\xi(3)}{4\pi}<{\bf G}<\frac{7\xi(3)}{2\pi}.
\end{eqnarray*}

By using \eqref{ginii} together with the definition of the Gini index, we directly obtain a double-integral representation {for} {\bf G}, given by
\begin{eqnarray*}
{\bf G}=\frac{7\xi(3)}{4\pi}+\frac{\pi}{4}\int_0^1\int_0^p\log\left(\tan\left(\frac{\pi}{4}(t+1)\right)\right)\,dt\,dp.
\end{eqnarray*}

\cite{eliazarandsokolov_2010} suggested that the Gini index can also be computed using the formula
\begin{eqnarray}
G(X)=\frac{1}{\mu_X}\int_0^{\infty}F_X(x)(1-F_X(x))\,dx.\label{othergini}
\end{eqnarray}

Then, from \eqref{ginii} and \eqref{othergini} together with \eqref{inverse}, we have the following representations of Catalan's constant,
\begin{eqnarray*}
{\bf G} &=& \frac{7\xi(3)}{2\pi}-\frac{\pi}{4}\int_0^{\infty}\tan^{-1}\left(\mbox{sinh}\left(\frac{\pi x}{2}\right)\right)\left[1-\frac{2}{\pi}\tan^{-1}\left(\mbox{sinh}\left(\frac{\pi x}{2}\right)\right)\right]\,dx,\\
{\bf G} &=& \frac{7\xi(3)}{2\pi}-\frac{\pi^2}{4}\int_0^{\infty}\left[\frac{4}{\pi}\tan^{-1}\left(e^{\pi x/2}\right)-1\right]\left[1-\frac{2}{\pi}\tan^{-1}\left(e^{\pi x/2}\right)\right]\,dx.
\end{eqnarray*}

The Theil index, another measure of economic inequality, is defined by
\begin{eqnarray*}
T_0(X)=-E_X(\log(X/\mu_X))=-E_X(\log X)+\log \mu_X,
\end{eqnarray*}
where $\mu_X$ is as given in \eqref{mean}.

By using the result established by \cite{reynoldsandstauffer_2020}, according to which
\begin{eqnarray*}
\int_0^{\infty} \log y\, \mbox{sech}\, y\,dy=\pi\log\left(\frac{\sqrt{2\pi}\Gamma(3/4)}{\Gamma(1/4)}\right)
\end{eqnarray*}
we have
\begin{eqnarray*}
\int_0^{\infty} \log y\, \mbox{sech}\, y\,dy &=& \frac{\pi}{2}\int_0^{\infty}\left[\log\left(\frac{\pi}{2}\right)+\log x\right]\,\mbox{sech}\left(\frac{\pi x}{2}\right)\,dx\\
&=&
\frac{\pi}{2}\left[\log\left(\frac{\pi}{2}\right)+\int_0^{\infty} \log x\,\mbox{sech}\,\left(\frac{\pi x}{2}\right)\right]\,dx.
\end{eqnarray*}

and hence
\begin{eqnarray}
E_X(\log X)) = 2\log\left(\frac{\sqrt{2\pi}\Gamma(3/4)}{\Gamma(1/4)}\right)-\log\left(\frac{\pi}{2}\right)=\log(8\pi^2)-4\log\Gamma(1/4)\nonumber\\\label{elog}
\end{eqnarray}
and therefore,
\begin{eqnarray*}
T_0(X)=\log {\bf G}+4\log\left(\frac{\Gamma(1/4)}{\pi}\right),
\end{eqnarray*}
where we have used the {identity} $\Gamma(1/4)\Gamma(3/4)=\pi\sqrt{2}$.

\begin{theorem} It holds that
\begin{eqnarray*}
\sum_{k=0}^{\infty}(-1)^k\frac{ \log(1+2k)}{1+2k}=\frac{\pi}{4}\left[-\gamma-\log(4\pi^3)+4\log\Gamma(1/4)\right],
\end{eqnarray*}
where $\gamma\approx 0.577216 $ is the Euler-Mascheroni constant.
\end{theorem}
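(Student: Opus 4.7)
The plan is to recover the target series as the derivative $\beta'(1)$ of the Dirichlet beta function defined in (\ref{beta}), and then to pin down $\beta'(1)$ by computing $E_X(\log X)$ in two independent ways and equating them.

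Concretely, I would start from the closed form for the raw moments derived in (\ref{GG}), namely
\[
M(a) := E_X(X^a) = \frac{2\,\Gamma(1+a)}{(\pi/2)^{a+1}}\,\beta(a+1),
\]
and note that $M(0)=1$, which is equivalent to the Leibniz value $\beta(1)=\pi/4$. Differentiating under the integral sign gives $M'(0)=E_X(\log X)$, while log-differentiating the explicit formula yields
\[
\frac{M'(a)}{M(a)} = \psi(1+a) - \log(\pi/2) + \frac{\beta'(a+1)}{\beta(a+1)}.
\]
Evaluating at $a=0$, using $\psi(1)=-\gamma$ and $\beta(1)=\pi/4$, and invoking the already-established value $E_X(\log X) = \log(8\pi^2) - 4\log\Gamma(1/4)$ from (\ref{elog}), I obtain
\[
-\gamma - \log(\pi/2) + \frac{4}{\pi}\,\beta'(1) \;=\; \log(8\pi^2) - 4\log\Gamma(1/4).
\]
Term-by-term differentiation of $\beta(s) = \sum_{k\geq 0}(-1)^k(1+2k)^{-s}$ at $s=1$ identifies $\beta'(1)$ with minus the target series, and the identity $\log(\pi/2)+\log(8\pi^2)=\log(4\pi^3)$ collapses the right-hand side. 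Solving for the series then yields the claim.

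The only steps that genuinely require justification beyond algebra are the two interchanges of derivative with summation and integration: differentiation under the integral for $\int_0^\infty x^a\,\mbox{sech}(\pi x/2)\,dx$ near $a=0$, and term-by-term differentiation of the alternating series defining $\beta(s)$ at $s=1$. The first follows from dominated convergence, since $\mbox{sech}(\pi x/2)$ decays exponentially and absorbs any $|\log x|\,x^{a}$ factor uniformly for $a$ in a small neighbourhood of $0$; the second is a standard consequence of Dirichlet's test, which gives uniform convergence of $\sum_{k}(-1)^k\log(1+2k)/(1+2k)^{s}$ on compact subsets of $\{\mathrm{Re}\,s>0\}$. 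With these two interchanges in hand, the rest of the proof is a one-line rearrangement.
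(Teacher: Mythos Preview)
Your proposal is correct and follows essentially the same route as the paper: differentiate the moment formula (\ref{GG}) at $a=0$, identify $M'(0)$ with $E_X(\log X)$ via (\ref{elog}), use $\psi(1)=-\gamma$, and solve for the series. The only cosmetic difference is that you package the alternating sum as $\beta(a+1)$ and speak of $\beta'(1)$, whereas the paper keeps the series explicit; you also supply the dominated-convergence and Dirichlet-test justifications for the two interchanges, which the paper leaves implicit.
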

\begin{proof} {We define} the function $g(a)=E_X(X^a)$. Then, from \eqref{GG} we have
\begin{eqnarray*}
g^{\prime}(a) &=& E_X(X^a \log X)= g(a)\left\{\psi(1+a)-\log\left(\frac{\pi}{2}\right)\right.\\
&&\left. -\left[{\sum_{k=0}^{\infty}(-1)^k\frac{1}{(1+2k)^{a+1}}}\right]^{-1}\sum_{k=0}^{\infty}(-1)^k\frac{ \log(1+2k)}{(1+2k)^{a+1}}\right\},
\end{eqnarray*}
where $\psi(z)=(d/dz)\log\Gamma(z)$ is the digamma function. Thus,
\begin{eqnarray*}
g^{\prime}(0)=E_X(\log X)=\psi(1)-\log\left(\frac{\pi}{2}\right)-\frac{4}{\pi}\sum_{k=0}^{\infty}(-1)^k\frac{ \log(1+2k)}{1+2k}.
\end{eqnarray*}

Now, by replacing $E_X(\log X)$ by \eqref{elog}, the result is obtained directly, taking into account that $\psi(1)=-\gamma$.
\end{proof}

An interesting but less well-known index of inequality is the
Pietra index, given by the proportion of total income that would
need to be reallocated across the population to achieve perfect
income equality. This {index is defined as},
\begin{eqnarray*}
P=\max_{0\leq p\leq 1}\left[p-L_{\alpha}(p)\right]=\frac{1}{2\mu}E|X-\mu|
\end{eqnarray*}
and corresponds to the maximal vertical deviation between the
Lorenz curve and the egalitarian line \citetext{\citealp{arnold_1986}}.

\begin{theorem}
The Pietra index associated with the pdf given in \eqref{pdfhshd} is achieved in
\begin{eqnarray}
p=\frac{4}{\pi}\tan^{-1}\left(e^{\frac{4{\bf G}}{\pi}}\right)-1.\label{pietra1}
\end{eqnarray}
and therefore is given by
\begin{eqnarray}
P(X)=\frac{1}{4{\bf G}}\Phi\left(-e^{\frac{8{\bf G}}{\pi}},2,\frac{1}{2}\right)e^{\frac{4{\bf G}}{\pi}}-2.\label{pietra2}
\end{eqnarray}
\end{theorem}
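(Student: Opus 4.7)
The plan is to exploit the fact that the Pietra index, written as $P=\max_{0\le p\le 1}[p-L_X(p)]$, is attained at the (unique) critical point of the concave function $p\mapsto p-L_X(p)$. Since the Lorenz curve has derivative $L_X'(p)=F_X^{-1}(p)/\mu_X$ (by the fundamental theorem of calculus applied to \eqref{defbas}), the first-order condition $1-L_X'(p)=0$ reduces to $F_X^{-1}(p)=\mu_X$, i.e.\ $p^\ast=F_X(\mu_X)$. Concavity is automatic because $F_X^{-1}$ is nondecreasing, so this critical point is indeed the global maximum on $[0,1]$.

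Next, I would compute $p^\ast$ explicitly by substituting $\mu_X=8\mathbf{G}/\pi^2$ from \eqref{mean} into the cdf \eqref{inverse}, giving
\begin{eqnarray*}
p^\ast=F_X(\mu_X)=\frac{4}{\pi}\tan^{-1}\!\left(e^{\pi\mu_X/2}\right)-1=\frac{4}{\pi}\tan^{-1}\!\left(e^{4\mathbf{G}/\pi}\right)-1,
\end{eqnarray*}
which is precisely \eqref{pietra1}. Notice that at $p^\ast$ we have $z(p^\ast)=\tan((\pi/4)(p^\ast+1))=e^{\pi\mu_X/2}=e^{4\mathbf{G}/\pi}$, and $F_X^{-1}(p^\ast)=\mu_X$. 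These two identities are what make the subsequent plug-in tractable.

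Finally, to obtain \eqref{pietra2}, I would substitute these evaluations into the closed form \eqref{lchhsd}. Writing $P(X)=p^\ast-L_X(p^\ast)$ and using $(1+p^\ast)F_X^{-1}(p^\ast)=(1+p^\ast)\mu_X$, the term $(1+p^\ast)$ will cancel against the factor $1/\mu_X$, producing a $(p^\ast+1)$ contribution that combines with the additive $+1$ in \eqref{lchhsd} and the leading $p^\ast$ in $p^\ast-L_X(p^\ast)$; all the $p^\ast$'s cancel, leaving only a constant plus the Lerch piece. Concretely,
\begin{eqnarray*}
P(X)=-\;\frac{1}{\mu_X}\Bigl[(1+p^\ast)\mu_X-\tfrac{2z(p^\ast)}{\pi^2}\Phi(-z(p^\ast)^2,2,1/2)\Bigr]-1+p^\ast,
\end{eqnarray*}
and substituting $\mu_X=8\mathbf{G}/\pi^2$ and $z(p^\ast)=e^{4\mathbf{G}/\pi}$ collapses the expression to \eqref{pietra2}.

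The main obstacle is purely bookkeeping: one must keep track of the $+1$ term at the end of \eqref{lchhsd} (which arises from $\mu_X=8\mathbf{G}/\pi^2$ in the proof of Theorem on the Lorenz curve) and verify that the $p^\ast$ contributions cancel cleanly so that the final answer depends only on $\mathbf{G}$ through the Lerch transcendent and an elementary exponential. No new identities beyond those already proved in the paper are needed; the argument is an application of the envelope-style calculus lemma $p^\ast=F_X(\mu_X)$ combined with careful substitution into \eqref{lchhsd}.
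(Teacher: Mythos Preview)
Your proposal is correct and follows essentially the same route as the paper: differentiate $p-L_X(p)$, set the derivative to zero, and substitute back into the Lorenz curve formula \eqref{lchhsd}. Your framing via the general identity $L_X'(p)=F_X^{-1}(p)/\mu_X$, which yields the clean characterisation $p^\ast=F_X(\mu_X)$, is slightly more conceptual than the paper's direct computation of $\frac{d}{dp}(L_X(p)-p)=\frac{\pi}{4\mathbf{G}}\log(z(p))-1$, but the two are equivalent since $F_X^{-1}(p)/\mu_X=\frac{\pi}{4\mathbf{G}}\log(z(p))$.
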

\begin{proof}
After computing the derivative of $L_X(p)-p$ and following some straightforward algebra, we see that
\begin{eqnarray*}
\frac{d}{dp}(L_X(p)-p)=\frac{\pi}{4{\bf G}}\log\left(z(p)\right)-1.
\end{eqnarray*}

Now, by equating this equation to zero and isolating $p$, we obtain \eqref{pietra1}. Finally, by substituting this value in $L(p)-p$, we have \eqref{pietra2}.
\end{proof}

\cite{eliazarandsokolov_2010} suggest an alternative way to derive the Pietra index, via the formula
\begin{eqnarray}
P(X)=\frac{1}{\mu_X}\int_{\mu_X}^{\infty}(1-F_X(x))\,dx.\label{otherpietra}
\end{eqnarray}

Now, by applying \eqref{otherpietra} together with \eqref{pietra2} we have
\begin{eqnarray*}
{\bf G} &=& \frac{1}{16}\left\{2\Phi\left(-e^{\frac{8{\bf G}}{\pi}},2,\frac{1}{2}\right)e^{\frac{4{\bf G}}{\pi}}-\pi^2\int_{8{\bf G}/\pi^2}^{\infty}
\left[1-\frac{2}{\pi}\tan^{-1}\left(\sinh\left(\frac{\pi x}{2}\right)\right)\right]\,dx\right\}\\
&=& \frac{1}{16}\left\{2\Phi\left(-e^{\frac{8{\bf G}}{\pi}},2,\frac{1}{2}\right)e^{\frac{4{\bf G}}{\pi}}-2\pi^2\int_{8{\bf G}/\pi^2}^{\infty}
\left[1-\frac{2}{\pi}\tan^{-1}\left(e^{\pi x/2}\right)\right]\,dx\right\}.
\end{eqnarray*}

\subsection{Other representations of Catalan's and Ap\'ery's constants}

In this section we obtain new representations of Catalan's and Ap\'ery's constants, based on expressions of the Gini index. For the first of these constants we have:

\begin{equation}\label{catalan11}
{\bf G}=\frac{7\xi(3)}{4\pi}+\frac{\pi^2}{16}\int_{0}^{\infty} \left(1-\frac{2}{\pi}\tan^{-1}\left(\sinh\left(\frac{\pi x}{2}\right)\right)\right)^2dx,
\end{equation}
and
\begin{equation}\label{catalan12}
{\bf G}=\frac{7\xi(3)}{2\pi}-\frac{\pi}{4}\int_{0}^{\infty}\tan^{-1}\left(\sinh\left(\frac{\pi x}{2}\right)\right)\left(1-\frac{2}{\pi}\tan^{-1}\left(\sinh\left(\frac{\pi x}{2}\right)\right)\right)dx.
\end{equation}
{The proof of (\ref{catalan11}) is based on the representation of the Gini index,
$$G(X)=1-\frac{1}{\mu_X}\int_{0}^{\infty}[1-F_X(x)]^2dx,$$
which can be found in \cite{arnold_2018}, equation (5.13). Expression \eqref{catalan12} is obtained using equations \eqref{ginii} and \eqref{othergini}.} \\

Ap\'ery's constant can be represented in these two ways,
\begin{equation}\label{appery11}
\xi(3)=\frac{\pi^3}{28}\int_{0}^{\infty}\left\{1-\left(
\frac{2}{\pi}\tan^{-1}\left(\sinh\left(\frac{\pi x}{2}\right)\right)
\right)^2\right\}dx,
\end{equation}
and
\begin{equation}\label{appery12}
\xi(3)=\frac{\pi^3}{14}\int_{0}^{\infty}x\,\mbox{sech}\left(\frac{\pi x}{2}\right)\frac{2}{\pi}\tan^{-1}\left(\sinh\left(\frac{\pi x}{2}\right)\right)dx.
\end{equation}
{Formulas (\ref{appery11}) and (\ref{appery12}) are obtained from (\ref{ginii}) and the alternative expressions for the Gini index,
$$G(X)=\frac{1}{\mu_X}\int_{0}^{\infty}[1-F(x)^2]dx-1,$$
and
$$G(X)=-1+\frac{2}{\mu_X}\int_{0}^{\infty}xf_X(x)F_X(x)dx.$$
This last expression corresponds to equation (5.15) in \cite{arnold_2018}.}

\section{Catalan's constant from mixtures of distributions}\label{s4}
Mixtures of  distributions play an important role in both the theory
and the practice of statistics. Suppose that $Y$ is a discrete or continuous random variable with mean $x>0$ and suppose that $x$ is random and follows the pdf given in \eqref{pdfhshd}. In this case, the pdf \eqref{pdfhshd} is referred to as the mixing density. Then, the unconditional probability function of $Y$ is given by $f_Y(Y)=E_X( f_Y(y|x))$ and the mean of this distribution is given by $E_Y(Y)=E_X(E_Y(Y|X))=8{\bf G}/\pi^2$. It then follows that
\begin{eqnarray*}
{\bf G}=\frac{\pi^2}{8}\int_0^{\infty}\int_Y y f_Y(y|x)f_X(x)\,dy\,dx=\frac{\pi^2}{8}\int_0^{\infty}\int_Y y f_Y(y|x)\,\mbox{sech}\left(\frac{\pi x}{2}\right)\,dy\,dx,
\end{eqnarray*}
when $Y$ is continuous and
\begin{eqnarray*}
{\bf G}=\frac{\pi^2}{8}\int_0^{\infty}\sum_{y=0}^{\infty} y f_Y(y|x)f_X(x)\,dx=\frac{\pi^2}{8}\int_0^{\infty}\sum_{y=0}^{\infty} y f_Y(y|x)\,\mbox{sech}\left(\frac{\pi x}{2}\right)\,dx,
\end{eqnarray*}
in the discrete case. {Some examples are now provided.}
\subsubsection*{Poisson} Suppose that the random variable $Y$ follows a Poisson distribution with mean $x>0$ and $x$ follows the pdf given in \eqref{pdfhshd}. Then the unconditional distribution of $Y$, whose probability function is not reproduced here, has a mean given by $E(Y)=8{\bf G}/\pi^2 $. Thus, we have the following representation of Catalan's constant,
\begin{eqnarray*}
{\bf G} &=& \frac{\pi^2}{8}\sum_{y=1}^{\infty}\frac{1}{(y-1)!}\int_0^{\infty}x^y \exp(-x)\,\mbox{sech}\left(\frac{\pi x}{2}\right)\,dx\\
&=& \frac{\pi^2}{2}\sum_{y=1}^{\infty}\sum_{k=0}^{\infty}\frac{(-1)^k 2^{y}y}{[2+\pi(1+2k)]^{y+1}},
\end{eqnarray*}
{for which we have used equation} \eqref{sh}.

\subsubsection*{Negative binomial}
Suppose the random variable $Y$ follows a negative binomial distribution with parameters $r>0$ and $p\in (0,1)$. Let us take the success probability as $p=r/(r+x)$ {and then} the mean of $Y$ is $x>0$. Assume now that $x$ follows the pdf given in \eqref{pdfhshd}. Then the unconditional distribution of $Y$ has a mean given by $E(Y)=8{\bf G}/\pi^2 $, from which we have the following representation of Catalan's constant
\begin{eqnarray*}
{\bf G} &=& \frac{\pi^2}{8}\sum_{k=1}^{\infty}\frac{(r+k-1)!r^r}{(r-1)!(k-1)!}\int_0^{\infty}\frac{x^k}{(r+x)^{r+k}}\,\mbox{sech}\left(\frac{\pi x}{2}\right)\,dx\\
&=& \frac{\pi^2}{4}\sum_{k=1}^{\infty}\frac{(r+k-1)!r^r}{(r-1)!(k-1)!}\int_0^{\infty}
\sum_{j=0}^{\infty}(-1)^j x^k (r+x)^{-r-k}e^{-\pi x(1+2j)/2}\,dx\\
&=& \frac{\pi^2}{4}\sum_{k=1}^{\infty}\frac{\Gamma(r+k)r}{\Gamma(r)\Gamma(k)}\int_0^{\infty}
\sum_{j=0}^{\infty}(-1)^j s^k (1+s)^{-r-k}e^{-\pi r s(1+2j)/2}\,ds\\
&=& \frac{\pi^2 }{4}\frac{r}{\Gamma(r)}\sum_{k=1}^{\infty} k\Gamma(r+k)\sum_{j=0}^{\infty}(-1)^j\,{\cal U}\left(k+1,2-r,\frac{r \pi}{2}(1+2j)\right),
\end{eqnarray*}

where
\begin{eqnarray*}
{\cal U}(a,b,z)=\frac{1}{\Gamma(a)}\int_0^{\infty} s^{a-1}(1+s)^{b-a-1}\exp(-z s)\,ds,\quad a,z>0,
\end{eqnarray*}
is the confluent hypergeometric function \citetext{see \citealp[p. 1085]{gradshteynetal_1994}}. By setting $r=1$, we have the special case of the geometric mixture {and we obtain}
\begin{eqnarray*}
{\bf G} =
\frac{\pi^2}{4} \sum_{k=1}^{\infty} k k!\sum_{j=0}^{\infty}(-1)^j\,{\cal U}\left(k+1,1,\frac{\pi}{2}(1+2j)\right).
\end{eqnarray*}


\subsubsection*{Exponential mixture}
Let $f(y|x)=x\exp(-x y)$ be an exponential distribution with the parameter $x>0$ and with the mean $E_Y(Y|x)=1/x$. Suppose now that $X$ follows the pdf given in
\eqref{ihs}. Then, by using the representation provided in \eqref{sh} and after some algebra {we obtain} the unconditional distribution of $Y$, which is given by
\begin{eqnarray*}
f_Y(y)=4\sum_{k=0}^{\infty}(-1)^k K_0(\sqrt{2\pi y(1+2k)}),\quad y>0,
\end{eqnarray*}
where $K_n(z)$ gives the modified Bessel function of the second kind. Then, the unconditional mean of $Y$ {is} $E_Y(Y)=E_X(1/X)$, and by using \eqref{mihs} together with the representation provided in \eqref{sh} we have
\begin{eqnarray*}
{\bf G} =
\frac{\pi^2}{2} \sum_{k=0}^{\infty} (-1)^k \int_0^{\infty} y\, K_0(\sqrt{2\pi y(1+2k})\,dy.
\end{eqnarray*}

\section{Further results}\label{s5}
Consider now the following pdf,
\begin{eqnarray}
f_X(x)=\frac{\pi}{2\tan^{-1}(\sinh(\pi/2))}\mbox{sech}\left(\frac{\pi x}{2}\right),\quad 0\leq x \leq 1,\label{npdf}
\end{eqnarray}
which is obtained from \eqref{pdfhshd} by restricting its {support} to the interval $[0,1]$. {The corresponding} cdf is given by
\begin{eqnarray*}
F_X(x)=\frac{\tan^{-1}(\sinh(\pi x/2)}{\tan^{-1}(\sinh(\pi/2))},\quad 0\leq x\leq 1.
\end{eqnarray*}

{Using} representation \eqref{en} and taking into account that \eqref{npdf} is a genuine pdf, it follows directly that
\begin{eqnarray}
\sum_{k=0}^{\infty}\frac{E_{2k}}{(1+2k)(2k)!}\left(\frac{\pi}{2}\right)^{2k}=\sum_{\substack{k=0 \\ k\,\mbox{\tiny even} }}^{\infty}\frac{E_{k}}{(1+k)k!}\left(\frac{\pi}{2}\right)^{k}=
\frac{2}{\pi}\tan^{-1}(\sinh(\pi/2)).\nonumber\\ \label{r1}
\end{eqnarray}

The next result provides the mean of \eqref{npdf}, which is also expressed in terms of Catalan's constant.
\begin{theorem} If $X$ follows the pdf \eqref{npdf} then its {mathematical expectation} is given by
\begin{eqnarray}
E_X(X) = \frac{4}{\pi\tan^{-1}(\sinh(\pi/2))}\left[{\bf G}-\frac{\pi}{2}\tan^{-1}(e^{-\pi/2})-\frac{e^{-\pi/2}}{4}\,
\Phi\left(-e^{-\pi},2,\frac{1}{2}\right)\right].\nonumber\\
\label{nmean}
\end{eqnarray}
\end{theorem}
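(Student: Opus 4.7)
The plan is to compute $E_X(X)$ directly from its definition as $\int_0^1 x f_X(x)\,dx$, using the geometric-type series expansion of sech given in \eqref{sh}, and then to identify the three resulting series as Catalan's constant, an inverse tangent, and a Lerch transcendent, respectively.

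First I would write
\begin{eqnarray*}
E_X(X) = \frac{\pi}{2\tan^{-1}(\sinh(\pi/2))}\int_0^1 x\,\mbox{sech}\!\left(\frac{\pi x}{2}\right)dx
\end{eqnarray*}
and expand the integrand using \eqref{sh} with $z = \pi x/2$, giving $\mbox{sech}(\pi x/2) = 2\sum_{k=0}^{\infty}(-1)^k e^{-\pi x(1+2k)/2}$. After interchanging sum and integral (justified by uniform convergence on $[0,1]$), the task reduces to evaluating $\int_0^1 x e^{-ax}\,dx$ with $a = \pi(1+2k)/2$, which by routine integration by parts equals $\frac{1-e^{-a}}{a^2} - \frac{e^{-a}}{a}$.

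Next I would collect the three resulting contributions. The $1/a^2$ piece yields $\frac{8}{\pi^2}\sum_{k=0}^{\infty}\frac{(-1)^k}{(1+2k)^2} = \frac{8\mathbf{G}}{\pi^2}$ by \eqref{catalan}. The $e^{-a}/a$ piece gives
\begin{eqnarray*}
\frac{4}{\pi}\sum_{k=0}^{\infty}\frac{(-1)^k(e^{-\pi/2})^{1+2k}}{1+2k} = \frac{4}{\pi}\tan^{-1}(e^{-\pi/2})
\end{eqnarray*}
by the arctangent series \eqref{srt}. The remaining $e^{-a}/a^2$ piece gives $\frac{8}{\pi^2}\sum_{k=0}^{\infty}\frac{(-1)^k(e^{-\pi/2})^{1+2k}}{(1+2k)^2}$, and applying the identity \eqref{lerch} with $z = e^{-\pi/2}$ rewrites this as $\frac{8}{\pi^2}\cdot\frac{e^{-\pi/2}}{4}\Phi(-e^{-\pi},2,1/2)$.

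Assembling the three pieces with the correct signs produces
\begin{eqnarray*}
\int_0^1 x\,\mbox{sech}\!\left(\frac{\pi x}{2}\right)dx = \frac{8}{\pi^2}\left[\mathbf{G} - \frac{\pi}{2}\tan^{-1}(e^{-\pi/2}) - \frac{e^{-\pi/2}}{4}\Phi(-e^{-\pi},2,1/2)\right],
\end{eqnarray*}
and multiplying by the normalising constant $\pi/(2\tan^{-1}(\sinh(\pi/2)))$ yields \eqref{nmean}. There is no substantive obstacle; the main care required is bookkeeping of the factors of $\pi/2$ coming from the substitution $a = \pi(1+2k)/2$, and recognising the Lerch transcendent via the identification already recorded in \eqref{lerch}, so that the three series terms line up exactly with the three terms of the bracketed expression in the theorem.
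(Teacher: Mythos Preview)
Your proposal is correct and follows essentially the same route as the paper: expand $\mbox{sech}(\pi x/2)$ via \eqref{sh}, integrate $x e^{-ax}$ term by term over $[0,1]$, and identify the three resulting series via \eqref{catalan}, \eqref{srt}, and \eqref{lerch}. The only cosmetic difference is that the paper writes $\int_0^1 x e^{-ax}\,dx$ as $\bigl[1-(1+a)e^{-a}\bigr]/a^2$ before splitting, whereas you split immediately into $\tfrac{1-e^{-a}}{a^2}-\tfrac{e^{-a}}{a}$; the subsequent identifications and bookkeeping are identical.
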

\begin{proof} By using the representation \eqref{sh} and the fact that $\int_0^1 x e^{-ax}\,dx=[1-(1+a)]e^{-a}/a$ we have
\begin{eqnarray*}
E_X(X) &=& \frac{4}{\pi\tan^{-1}(\sinh(\pi/2))}\sum_{k=0}^{\infty}(-1)^k\frac{1-\left[1+\frac{\pi}{2}(1+2k)\right]e^{-\frac{\pi}{2}(1+2k)}}{(1+2k)^2}\\
&=& \frac{4}{\pi\tan^{-1}(\sinh(\pi/2))}\left\{{\bf G}-e^{-\pi/2}\left[\sum_{k=0}^{\infty}\frac{(-1)^ke^{-\pi k}}{(1+2k)^2}+\frac{\pi}{2}\sum_{k=0}^{\infty}(-1)^k\frac{e^{-\pi k}}{1+2k}\right]\right\}.
\end{eqnarray*}

Now, from \eqref{srt} we see that
\begin{eqnarray*}
\sum_{k=0}^{\infty}(-1)^k\frac{e^{-\pi k}}{1+2k}=e^{\pi/2}\tan^{-1}(e^{-\pi/2}),
\end{eqnarray*}
and from \eqref{lerch}
\begin{eqnarray*}
\sum_{k=0}^{\infty}\frac{(-1)^ke^{-\pi k}}{(1+2k)^2} = \frac{1}{4}\,
\Phi\left(-e^{-\pi},2,\frac{1}{2}\right),
\end{eqnarray*}
and after some algebra, the result follows.
\end{proof}

Thus, we have the following representation of Catalan's constant,
\begin{eqnarray}
{\bf G} = \frac{\pi^2}{8}\int_0^1 x\,\mbox{sech}\left(\frac{\pi x}{2}\right)\,dx+\frac{\pi}{2}\tan^{-1}(e^{-\pi/2})
+\frac{e^{-\pi/2}}{4}\,
\Phi\left(-e^{-\pi},2,\frac{1}{2}\right).\nonumber\\
\label{nmpdf}
\end{eqnarray}

Now, by using the representation given in \eqref{en}, we obtain
\begin{eqnarray}
E_X(X) &=& \frac{\pi}{2\tan^{-1}(\sinh(\pi/2))}\int_0^{1}x\sum_{k=0}^{\infty}\frac{E_{2k}}{(2k)!}\left(\frac{\pi x}{2}\right)^{2k}\,dx\nonumber\\
&=& \frac{\pi}{2\tan^{-1}(\sinh(\pi/2))}\sum_{k=0}^{\infty}\frac{E_{2k}}{(2k)!}\left(\frac{\pi}{2}\right)^{2k}\int_0^{1}x^{2k+1}\,dx\nonumber\\
&=& \frac{\pi}{4\tan^{-1}(\sinh(\pi/2))}\sum_{k=0}^{\infty}\frac{E_{2k}}{(1+k)(2k)!}\left(\frac{\pi}{2}\right)^{2k}.\label{G2}
\end{eqnarray}

Therefore, by equating \eqref{G2} with \eqref{nmean} we arrive at the following:
\begin{eqnarray*}
{\bf G}= \frac{\pi^2}{16}\sum_{k=0}^{\infty}\frac{E_{2k}}{(1+k)(2k)!}\left(\frac{\pi}{2}\right)^{2k}+\frac{\pi}{2}\tan^{-1}(e^{-\pi/2})
+\frac{e^{-\pi/2}}{4}\,
\Phi\left(-e^{-\pi},2,\frac{1}{2}\right).
\end{eqnarray*}

Furthermore, from \eqref{nmpdf}, taking into account \eqref{npdf} and again using \eqref{m1} and \eqref{m2}, we obtain the following representations of Catalan's constant
\begin{eqnarray*}
{\bf G} &=& \frac{\pi}{4}\left[\tan^{-1}\left(\mbox{sinh}\left(\frac{\pi}{2}\right)\right)-
\int_0^1\tan^{-1}\left(\mbox{sinh}\left(\frac{\pi x}{2}\right)\right)\,dx\right]\\
&&+\frac{\pi}{2}\tan^{-1}(e^{-\pi/2})) + \frac{e^{-\pi/2}}{4}\,
\Phi\left(-e^{-\pi},2,\frac{1}{2}\right),\\
{\bf G} &=& \frac{1}{2}\tan^{-1}\left(\mbox{sinh}\left(\frac{\pi}{2}\right)\right)\int_0^1 \mbox{sinh}^{-1}\left(\tan\left(x\tan^{-1}\left(\mbox{sinh}\left(\frac{\pi}{2}\right)\right)\right)\right)\,dx\\
&&+\frac{\pi}{2}\tan^{-1}(e^{-\pi/2})
+\frac{e^{-\pi/2}}{4}\,
\Phi\left(-e^{-\pi},2,\frac{1}{2}\right).
\end{eqnarray*}

Finally, it is known \citetext{\citealp{holst_2013}} that
\begin{eqnarray}
E_{2k}\left(\frac{\pi}{2}\right)^{2k}=\frac{4}{\pi}\sum_{j=0}^{\infty}(-1)^j\int_0^{\infty} x^{2k}\exp[-x(1+2j)]\,dx,\quad k\,\mbox{even},\label{r2}\\
E_{2k}\left(\frac{\pi}{2}\right)^{2k}=-\frac{4}{\pi}\sum_{j=0}^{\infty}(-1)^j\int_0^{\infty} x^{2k}\exp[-x(1+2j)]\,dx,\quad k\,\mbox{odd},\label{r3}
\end{eqnarray}
and therefore, from \eqref{r1}, \eqref{r2} and \eqref{r3} we have
\begin{eqnarray*}
\sum_{\substack{k=0 \\ k\,\mbox{\tiny even}}}^{\infty}\sum_{j=0}^{\infty}\frac{(-1)^j}{(1+2k)(1+2j)^{2k+1}}&-&
\sum_{\substack{k=0 \\ k\,\mbox{\tiny odd}}}^{\infty}\sum_{j=0}^{\infty}\frac{(-1)^j}{(1+2k)(1+2j)^{2k+1}}\\
&=&\frac{1}{2}\tan^{-1}(\sinh(\pi/2)).
\end{eqnarray*}

\paragraph{Acknowledgements.}
EGD was partially funded by grant PID2021-127989OB-I00 (Ministerio de
Econom\'ia y Competitividad, Spain) and by grand TUR-RETOS2022-075.

\vfill\eject

\end{document}